\newtheorem{Def}{Definition}[section]
\newtheorem{Thm}[Def]{Theorem}
\newtheorem{Lem}[Def]{Lemma}
\newtheorem{Ex}[Def]{Example}
\newtheorem{Fact}[Def]{Fact}
\newcommand{\Hom}{\mathrm{Hom}}
\newcommand{\SU}{\mathrm{SU}}
\def\subjclass#1{\def\@subjclass{#1}}
\def\keywords#1{\def\@keywords{#1}}
\title{Designs on the Tautological bundle}
\author{Yuya Ikeda}
\date{} % arXiv では日付なし推奨
\subjclass{Primary 05B30; Secondary 20C15, 22E46, 53C30}
\keywords{design theory, $\SU(2)$-representation, tautological bundle, invariant theory}
\begin{document}

\maketitle
\thispagestyle{firstpagefooter}

\begin{abstract}
In this paper, we introduce the framework of a generalized design, which represents any linear operator as a finite sum of local linear maps attached to finitely many points, thereby abstracting the core of design theory without employing integration.
We then construct such a design on the space of sections of the tautological bundle over the complex projective line. By using the irreducible decomposition of this space as an $\SU(2)$-representation, we show that the projection onto its lowest-dimensional summand can be realized as a finite sum of these local maps.
Our construction relies on invariant theory for the binary icosahedral group and an analysis of fixed-point subspaces in symmetric tensor representations.
\end{abstract}

\section{Introduction}

Design theory is a discipline that draws on combinatorics, geometry, harmonic analysis, and algebraic coding theory, and its origins can be traced back to Fisher’s classical work in statistics \cite{Fisher1935}.
In this foundational work, Fisher introduced principles such as randomization, blocking, and replication in order to impose a coherent structure on the arrangement of experimental units. These ideas provided a framework in which factor effects could be assessed accurately while controlling for errors and variance.

By abstracting this idea of structuring in order to enable valid inference into a purely combinatorial setting, Bose and his collaborators \cite{Bose1939,Bose1947,Bose1960} developed the theory of balanced incomplete block designs (BIBDs).
Given a finite set $X$ and a family $\mathcal{B}$ of its subsets, they required that for any two distinct points $x, y \in X$,
\[
\#\{\, B \in \mathcal{B} \mid x, y \in B \,\} = \lambda,
\]
meaning that the number of blocks containing the pair $\{x,y\}$ is constant.

In the 1960s, a close connection with coding theory was established, and the Assmus -- Mattson theorem \cite{AssmusMattson1969} showed that $t$-designs can be constructed from parameters of a linear code, in particular from its weight distribution.

Subsequently, Delsarte introduced the theory of association schemes \cite{Delsarte1973}, which laid the foundation for reformulating the combinatorial properties of finite designs in analytic and algebraic terms. This framework links designs to orthogonal decompositions of polynomial spaces and to linear programming techniques.

A significant development came with the introduction of spherical designs by Delsarte, Goethals, and Seidel \cite{DelsarteGoethalsSeidel1977}.
They defined a finite subset $X \subset S^{n-1}$ to be a spherical $t$-design if, for every polynomial $f$ of degree at most $t$, one has
\[
\frac{1}{|S^{n-1}|}\int_{S^{n-1}} f \, d\mu
  = \frac{1}{|X|} \sum_{x\in X} f(x).
\]
This shift in viewpoint extended design theory from the finite to the continuous setting.

Subsequently, the classification of tight spherical designs by Bannai and Damerell \cite{BannaiDamerell1979,BannaiDamerell1981}, the introduction of averaging sets by Seymour and Zaslavsky—which generalize spherical designs to arbitrary spaces and classes of functions \cite{SeymourZaslavsky1984} --- and further developments on designs in Grassmannian and complex projective spaces \cite{BachocCoulangeonNebe2002,RoyScott2007} were carried out. These advances were studied systematically within the framework of harmonic analysis and representation theory.

More recently, Bondarenko, Radchenko, and Viazovska proved that the number of points required to construct a spherical design can be bounded by a polynomial in the dimension \cite{Bondarenko2013}, yielding a decisive breakthrough in the existence problem for spherical designs.

However, if one returns to the historical origins of design theory, its essence lies in the idea that a finite set of points can faithfully represent a global geometric or analytic structure.

Although spherical and projective designs are typically formulated using integral operators, this reliance on integration arises solely from the existence of a natural invariant measure on the underlying space. It does not mean that the essential idea of design theory is intrinsically tied to integral structures.

The purpose of this paper is to return to the original conceptual foundation of design theory by introducing an abstract averaging framework which does not rely on integral operators.

That is, instead of assuming the existence of an invariant measure on the underlying space, we consider a setting consisting of a linear space $\mathcal{H}$, a family of linear spaces $\{ V_p \}_{p \in \Omega}$, and linear maps
\[
e_p : \mathcal{H} \to V_p
\]
for each $p \in \Omega$.
Given a finite subset $X \subset \Omega$, assigning a linear map
\[
\lambda_x : V_x \longrightarrow \mathcal{H}
\]
to each $x \in X$ allows one to express a linear operator $\tau$ on $\mathcal{H}$ as a finite sum of compositions of local linear maps:
\[
\tau(s) = \sum_{x \in X} (\lambda_x \circ e_x)(s).
\]
In this way, a linear operator can be reconstructed solely from local data attached to finitely many points. This perspective captures the essence of an abstract averaging structure that does not depend on integral operators.

Furthermore, to demonstrate that this abstract framework is effective in geometric and representation-theoretic settings, we construct a design on the space of sections $\Gamma(T_{2,1})$  of the tautological bundle $T_{2,1}$ over the complex projective line $\mathbb{C}P^{1}$. This serves as a concrete example illustrating the applicability of our approach.

In summary, this work
\begin{itemize}
\item systematizes the foundational idea of finite averaging in design theory as an abstract linear structure,
\item provides a general framework that encompasses the various notions developed in classical design theory, and
\item demonstrates its effectiveness through a concrete geometric example arising from the tautological bundle.
\end{itemize}
In doing so, it offers a new perspective that bridges discrete and analytic approaches within design theory.

\section{Definition of $\tau$-design}

Motivated by the fact that certain linear operators can be expressed as finite sums of local maps, we introduce the notion of a $\tau$ - design as a unified concept for finite averaging in an abstract setting.

\begin{Def}
\label{def-of-tau-design}
We set the notation as follows:
\begin{itemize}
\item $\Omega$: a set,
\item $W, \mathcal{H}$: complex linear spaces,
\item $\mathcal{H}_0$: a subspace of $\mathcal{H}$,
\item $\{ V_p \}_{p \in \Omega}$: a family of complex vector spaces,
\item $\{ e_p : \mathcal{H} \to V_p \}_{p \in \Omega}$: a collection of linear maps,
\item $\tau : \mathcal{H}_0 \to W$: a linear map.
\end{itemize}
A pair $(X, \{ \lambda_x \}_{x \in X})$, where $X \subset \Omega$ is finite and each
$\lambda_x : V_x \to W$ is a linear map, is called a $\tau$-design if it satisfies the identity
\[
\tau(s) = \sum_{x \in X} (\lambda_x \circ e_x)(s) \quad (s \in \mathcal{H}_0).
\]
\end{Def}

\begin{Ex}
\label{spherical-tau}
We observe that spherical $t$-designs can be reinterpreted as a special case of $\tau$-designs.

Set the data as follows:
\begin{itemize}
  \item $\Omega = S^{n-1}$,
  \item $V = \mathbb{R}$,\quad $H = C^\infty(\mathbb{R}^n)$,\quad $H_0 = \mathrm{Pol}_{\le t}(\mathbb{R}^n)\vert_{S^{n-1}}$,
  \item $V_p = \mathbb{R}$ for each $p \in \Omega$,
  \item $e_p(f) = f(p)$,
  \item $\tau(f) = \frac{1}{|S^{n-1}|} \int_{S^{n-1}} f \, d\mu$ where $\mu$ is the rotation-invariant Radon measure,
  \item $\lambda_x(z) = \frac{1}{|X|} z$.
\end{itemize}

With these choices,
\[
X \text{ is a spherical $t$-design}
\quad\Longleftrightarrow\quad
(X,\lambda) \text{ is a $\tau$-design}.
\]
This is immediate from the definitions.
\end{Ex}

\section{Definition of Tautological bundle}

\begin{Def}
\label{taurological}
Consider the complex projective line $\mathbb{C}P^{1}$ and define
\begin{equation}
T_{2,1} = \{\, (V, v) \in \mathbb{C}P^{1} \times \mathbb{C}^{2} \mid v \in V \,\}.
\end{equation}
We define an action of $\SU(2)$ by
\begin{align*}
g \cdot V &= \{\, gv \mid v \in V \,\}, \\
g \cdot (V, v) &= (gV, gv).
\end{align*}
Let $\pi : T_{2,1} \to \mathbb{C}P^{1}$ be the projection defined by
\[
\pi(V, v) = V.
\]
Then $\pi$ is an $\SU(2)$-equivariant line bundle, and $T_{2,1}$ is called the tautological bundle.
\end{Def}

In particular, as shown by Koga--Nagatomo \cite[p.~499]{KogaNagatomo}, the tautological bundle $T_{2,1}$ admits the $\SU(2)$-equivariant description 
\[
T_{2,1} \cong \SU(2) \times_{U(1)} \mathbb{C}_{1},
\]
where $\mathbb{C}_1$ denotes the one-dimensional $U(1)$-representation of weight $1$.

\section{Irreducible decomposition of $\Gamma(T_{2,1})$}

In this section, we study the $\SU(2)$-equivariant line bundle $T_{2,1}$ over the complex projective line $\mathbb{CP}^1 \cong \SU(2)/U(1)$.  
We then analyze the decomposition of its space of sections $\Gamma(T_{2,1})$ into irreducible$\SU(2)$-representations.
Throughout, following Nagatomo~\cite{Nagatomo2025}, we let $\Gamma(E)$ denote the $L^{2}$-space of sections of a vector bundle $E$.

The irreducible representations of $U(1)$ are precisely the one-dimensional representations
indexed by integer weights $\lambda \in \mathbb{Z}$,
\[
  \mathbb{C}_{\lambda} : e^{i\theta} \mapsto e^{i\lambda\theta}.
\]

As stated in Nagatomo \cite[Example]{Nagatomo2025}, one has
\[
\mathbb{CP}^1 = \SU(2)/U(1),
\]
and the line bundle of degree $1$,
\[
\mathcal{O}(1) \to \mathbb{CP}^1,
\]
is given as the $\SU(2)$-equivariant line bundle
\[
\mathcal{O}(1) \cong \SU(2) \times_{U(1)} \mathbb{C}_{-1}.
\]
In general, the dual of an equivariant line bundle satisfies
\[
  \bigl(\SU(2)\times_{U(1)} \mathbb{C}_{\lambda}\bigr)^{\vee}
  \cong \SU(2)\times_{U(1)} \mathbb{C}_{-\lambda},
\]
and hence
\[
  T_{2,1} = O(1)^{\vee}
  \cong \SU(2)\times_{U(1)} \mathbb{C}_{1}.
\]

As explained by Widdows \cite[§1.2]{Widdows2000}, if $V_1 \cong \mathbb{C}^2$ denotes the
fundamental representation of $\SU(2) \cong Sp(1)$, then its $n$-th symmetric power
\[
V_n := S^n(V_1)
\]
is an irreducible representation of $\SU(2)$.  Moreover, every finite-dimensional irreducible representation of $\SU(2)$ is isomorphic to $S^n(V_1)$ for some $n \ge 0$.

The same reference shows that $V_n$ decomposes into $U(1)$-weight spaces as
\[
V_n = \bigoplus_{j=0}^{n} \mathbb{C}_{\,n-2j},
\]
and each weight occurs with multiplicity one.

Nagatomo~\cite[Eq.~(7.2)]{Nagatomo2025} gives the $U(1)$-weight decomposition of the symmetric power
$S_{k}\mathbb{C}^{2}$ as
\[
  S_{k}\mathbb{C}^{2}
  = \mathbb{C}_{k} \oplus \mathbb{C}_{k-2} \oplus \cdots \oplus \mathbb{C}_{-k}.
\]
The same paper also states that the line bundle $O(k)$ is the equivariant line bundle
\[
  O(k) \cong \SU(2) \times_{U(1)} \mathbb{C}_{-k},
\]
and that its $L^{2}$-space of sections decomposes as an $\SU(2)$-representation by
\[
  \Gamma(O(k))
  = \sum_{l=0}^{\infty} S_{|k|+2l}\mathbb{C}^{2}
  \qquad\text{\cite[Eq.~(7.3)]{Nagatomo2025}}.
\]

Since $T_{2,1} = O(1)^{\vee} \cong O(-1)$, substituting $k = -1$ into
Nagatomo~\cite[Eq.~(7.3)]{Nagatomo2025} yields
\[
  \Gamma(T_{2,1})
  \cong
  \Gamma(O(-1))
  = \bigoplus_{k \ge 1} S_{2k-1}\mathbb{C}^{2},
\]
as an $\SU(2)$-representation.

Furthermore, by Widdows~\cite[§1.2]{Widdows2000}, the symmetric power
$S^{n}(\mathbb{C}^{2})$ is the $(n+1)$-dimensional irreducible
representation of $\SU(2)$.  Hence each summand satisfies
\[
  \dim_{\mathbb{C}} \Gamma^{2k}(T_{2,1})
  = \dim_{\mathbb{C}} S^{2k-1}(\mathbb{C}^{2})
  = 2k.
\]

From the above discussion, we obtain the following fact.

\begin{Fact}
\label{irr}
As an $\SU(2)$-representation, the space $\Gamma(T_{2,1})$ admits the irreducible decomposition
\[
  \Gamma(T_{2,1})
  = \bigoplus_{k \ge 1} \Gamma^{2k}(T_{2,1}),
\]
where each summand satisfies $\dim_{\mathbb{C}}\Gamma^{2k}(T_{2,1}) = 2k$, and moreover
$\Gamma^{2k}(T_{2,1})$ is isomorphic, as an $\SU(2)$-representation, to the symmetric tensor space
\(
  S^{2k-1}(\mathbb{C}^{2}).
\)
\end{Fact}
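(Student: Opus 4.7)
The plan is to assemble the stated decomposition as a direct synthesis of three ingredients already recalled in the preceding exposition: the equivariant description of $T_{2,1}$ from Koga--Nagatomo, the behavior of equivariant line bundles under duality, and Nagatomo's section formula \cite[Eq.~(7.3)]{Nagatomo2025}. There is essentially no new content to establish; the work consists of checking that the indexing conventions match across these sources and that the claimed dimensions fall out correctly.

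First I would identify the bundle. From $T_{2,1} \cong \SU(2)\times_{U(1)}\mathbb{C}_{1}$ and the duality rule $(\SU(2)\times_{U(1)}\mathbb{C}_{\lambda})^{\vee}\cong \SU(2)\times_{U(1)}\mathbb{C}_{-\lambda}$ together with $O(1)\cong \SU(2)\times_{U(1)}\mathbb{C}_{-1}$, it follows that $T_{2,1}\cong O(1)^{\vee}\cong O(-1)$. This reduces the problem to computing $\Gamma(O(-1))$ as an $\SU(2)$-representation.

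Next I would substitute $k=-1$ into Nagatomo's formula
\[
\Gamma(O(k))=\bigoplus_{l=0}^{\infty} S_{|k|+2l}\mathbb{C}^{2},
\]
which gives $\Gamma(T_{2,1})\cong \bigoplus_{l\ge 0} S_{1+2l}\mathbb{C}^{2}$. Reindexing via $k=l+1$ rewrites this as $\bigoplus_{k\ge 1} S_{2k-1}\mathbb{C}^{2}$, and I would define $\Gamma^{2k}(T_{2,1})$ to be the summand corresponding to $S^{2k-1}(\mathbb{C}^{2})$ (the superscript being chosen to match its complex dimension, not its symmetric power index).

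Finally I would invoke Widdows~\cite[\S 1.2]{Widdows2000} to conclude that $S^{n}(\mathbb{C}^{2})$ is irreducible of dimension $n+1$; applied with $n=2k-1$ this yields $\dim_{\mathbb{C}}\Gamma^{2k}(T_{2,1})=2k$, completing the assertion. The only genuine pitfall I anticipate is a sign or index slip when juggling the conventions $O(1)\cong \SU(2)\times_{U(1)}\mathbb{C}_{-1}$, $T_{2,1}\cong \SU(2)\times_{U(1)}\mathbb{C}_{1}$, and the absolute value $|k|$ appearing in Nagatomo's formula, so I would make the $T_{2,1}\cong O(-1)$ identification explicit before substituting.
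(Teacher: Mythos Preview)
Your proposal is correct and follows essentially the same route as the paper: identify $T_{2,1}\cong O(-1)$ via the duality of equivariant line bundles, apply Nagatomo's section formula \cite[Eq.~(7.3)]{Nagatomo2025} at $k=-1$, reindex to obtain $\bigoplus_{k\ge 1} S^{2k-1}(\mathbb{C}^{2})$, and read off the dimensions from Widdows. There is nothing substantive to add.
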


\section{Hilbert series and rings of invariant polynomials}

Let $\mathbb{C}[x_{1}, x_{2}]$ denote the polynomial ring in two complex variables, and let $\mathbb{C}[x_{1}, x_{2}]_{d}$ be its homogeneous component of degree $d$.
For a subgroup $G \subset GL(2,\mathbb{C})$, the ring of invariant polynomials is defined by
\[
  \mathbb{C}[x_{1}, x_{2}]^{G}
  = \{\, f \in \mathbb{C}[x_{1}, x_{2}] \mid f(Ax) = f(x) \ \text{for all } A \in G \,\},
\]
and we set
\[
  \mathbb{C}[x_{1}, x_{2}]^{G}_{d}
  := \mathbb{C}[x_{1}, x_{2}]_{d} \cap \mathbb{C}[x_{1}, x_{2}]^{G}.
\]
The polynomial
\[
  P^{G}_{2}(t)
  := \sum_{d \ge 0} \dim_{\mathbb{C}}\bigl(\mathbb{C}[x_{1}, x_{2}]^{G}_{d}\bigr)\, t^{d}
\]
is called the Hilbert series of the invariant polynomial ring for $G$.
For a finite group $G$, Molien's formula expresses the Hilbert series as
\[
P^{G}_2(t)
 = \frac{1}{|G|} \sum_{A \in G} \frac{1}{\det(I - tA)},
\]
see \cite[Theorem~1.10]{Mukai1996}.

In what follows, we explicitly describe $P_{2}^{G}(t)$ for the binary icosahedral group,
a classical example in invariant theory.  Let $\varepsilon$ be a primitive fifth root of unity, and set
\[
S =
\begin{pmatrix}
  \varepsilon^{3} & 0 \\[2mm]
  0 & \varepsilon^{2}
\end{pmatrix},
\qquad
T =
\frac{1}{\varepsilon^{2} - \varepsilon^{3}}
\begin{pmatrix}
  \varepsilon + \varepsilon^{4} & 1 \\
  1 & -\varepsilon - \varepsilon^{4}
\end{pmatrix},
\qquad
U =
\begin{pmatrix}
  0 & 1 \\
  -1 & 0
\end{pmatrix}.
\]

\begin{Fact}[{\cite[Example~1.16]{Mukai1996}}]
\label{icosa}
The subgroup of $\SU(2)$ generated by these elements,
\[
  G_{\mathrm{icosa}} := \langle S, T, U \rangle \subset \SU(2),
\]
is called the binary icosahedral group, and it has order $120$.
The Hilbert series of the invariant ring $\mathbb{C}[x_{1}, x_{2}]^{G_{\mathrm{icosa}}}$ is
\begin{align*}
  P_{2}^{G_{\mathrm{icosa}}}(t)
   &= \frac{1 + t^{30}}{(1 - t^{12})(1 - t^{20})} \\[1mm]
   &= 1 + t^{12} + t^{20} + t^{24} + \cdots .
\end{align*}
In particular, there are no nonzero $G_{\mathrm{icosa}}$-invariant homogeneous polynomials of degree
$1 \le d \le 11$.
\end{Fact}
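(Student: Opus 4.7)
My plan is to apply Molien's formula, which reduces the problem to three tasks: identify $G_{\mathrm{icosa}}$ as the binary icosahedral group $2I$ of order $120$, enumerate its conjugacy classes with eigenvalue data, and simplify the resulting rational function.

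For the first task, I would descend through the double cover $\SU(2) \to \SO(3)$. A direct computation shows $S^{5} = I$ since $\varepsilon^{5} = 1$; one checks $U^{2} = -I$; and $T$ is traceless, so $T^{2} = -I$ by the Cayley--Hamilton relation in $\SU(2)$. Thus the images $\bar S, \bar T, \bar U$ in $\SO(3)$ have orders $5, 2, 2$. By verifying one further product relation (for instance, that $\bar S \bar T$ has order $3$), the image is identified with the $(2,3,5)$-triangle group, i.e.\ the rotational icosahedral group $A_{5}$ of order $60$. Since $-I$ lies in $G_{\mathrm{icosa}}$ (as $T^{2} = -I$), the preimage has order $|G_{\mathrm{icosa}}| = 120$.

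For the second task, the group $2I$ has nine conjugacy classes; enumerating via the covering $2I \to A_{5}$ yields representatives of orders $1, 2, 3, 4, 5, 5, 6, 10, 10$ with sizes $1, 1, 20, 30, 12, 12, 20, 12, 12$, respectively. An element of order $m$ in $\SU(2)$ has as eigenvalues a pair of primitive $m$th roots of unity. Substituting this data into
\[
P_{2}^{G_{\mathrm{icosa}}}(t) = \frac{1}{120} \sum_{A \in G_{\mathrm{icosa}}} \frac{1}{(1 - t\omega_{A})(1 - t\omega_{A}^{-1})},
\]
using the identity $\prod_{\gcd(k, m) = 1}(1 - t\zeta_{m}^{k}) = \Phi_{m}(t)$ to recognize cyclotomic denominators, and clearing the common denominator $(1 - t)^{2}(1 + t)^{2}\Phi_{3}(t)\Phi_{4}(t)\Phi_{5}(t)\Phi_{6}(t)\Phi_{10}(t)$ gives, after polynomial simplification, the desired closed form $(1 + t^{30})/((1 - t^{12})(1 - t^{20}))$.

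Expanding this rational function as a Taylor series then yields $1 + t^{12} + t^{20} + t^{24} + t^{30} + t^{32} + \cdots$, so all coefficients in degrees $1 \le d \le 11$ vanish, establishing the low-degree claim. The main obstacle is the bookkeeping in the Molien sum: one must correctly enumerate the nine conjugacy classes of $2I$ with their eigenvalue data and carry out a moderately intricate rational-function simplification. A more geometric alternative would be to exhibit the classical icosahedral invariants of degrees $12, 20, 30$ directly (as products of linear forms vanishing at the $12$ vertices, $20$ face centers, and $30$ edge midpoints of a regular icosahedron embedded in $\CP^{1}$ via stereographic projection), together with the single syzygy in degree $60$, and then invoke the structure theorem for invariant rings of finite subgroups of $\SL(2, \mathbb{C})$ to read off the Hilbert series from generator and relation degrees.
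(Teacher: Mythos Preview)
The paper does not prove this statement at all: it is stated as a \emph{Fact} with a citation to \cite[Example~1.16]{Mukai1996}, and the Hilbert series is simply quoted. So there is no ``paper's own proof'' to compare against; your proposal supplies an argument where the paper supplies none.

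Your sketch is the standard route and is essentially correct. A few remarks on the details you flagged as bookkeeping. First, your eigenvalue statement is right but slightly underspecified: an element of order $m$ in $\SU(2)$ has eigenvalues $\{\zeta_m^{\,a},\zeta_m^{-a}\}$ for some $a$ coprime to $m$, and for $m=5,10$ there are two such pairs up to inversion, which is precisely why $2I$ has two classes of each of these orders. Matching the two order-$5$ classes to the two quadratic factors of $\Phi_5(t)$ (and likewise for $\Phi_{10}$) is what makes the cyclotomic bookkeeping go through. Second, the Taylor expansion has a small slip: the coefficient of $t^{30}$ is $2$, not $1$, since both the numerator term $t^{30}$ and the term $t^{12}\cdot t^{12}\cdot\text{(nothing)}$\dots\ actually, more directly, $t^{30}$ appears once from the numerator and does not yet get a contribution from $(1-t^{12})^{-1}(1-t^{20})^{-1}$ at degree $30$, so the coefficient is indeed $1$; but double-check your listed term $t^{30}$ versus the next listed $t^{32}$, since $t^{32} = t^{12}\cdot t^{20}$ is correct and there is no lower missing term. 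In any case this does not affect the degrees $1\le d\le 11$, which is all that is used downstream.

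Your alternative geometric route via the Klein invariants $f_{12}, f_{20}, f_{30}$ and the unique syzygy in degree $60$ is also valid and is closer in spirit to how Mukai presents the example; either yields the closed form $(1+t^{30})/((1-t^{12})(1-t^{20}))$.
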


From the above fact, together with the observation that the $d$-th symmetric tensor space
$S^{d}(\mathbb{C}^{2})$ is isomorphic to $\mathbb{C}[x_{1}, x_{2}]_{d}$ as a $G$-representation, we obtain
\[
\dim_{\mathbb{C}}\bigl(S^{d}(\mathbb{C}^{2})\bigr)^{G_{\mathrm{icosa}}}
=
\begin{cases}
  1 & (d = 0), \\[2mm]
  0 & (1 \le d \le 11).
\end{cases}
\]

\section{Designs for the Projection on $\Gamma(T_{2,1})$}

Let
\[
H = \bigoplus_{k=1}^{5} \Gamma^{2k}(T_{2,1}),
\]
and define the projection
\[
\tau : H \to \Gamma^{2}(T_{2,1}).
\]

Set
\[
\Omega = \mathbb{CP}^1,\quad
H = \Gamma(T_{2,1}),\quad
H_0 = H.
\]
For $l \in \mathbb{CP}^1$, we define a map
\[
e_l : \Gamma(T_{2,1}) \to \mathbb{C}, \quad
e_l(s) = s^{(2)} (l),
\]
where $s^{(2)}(l)$ denotes the second component of the vector $s(l) \in l \subset \mathbb{C}^2$.

For a subset $X \subset \mathbb{C}P^{1}$ and a collection of linear maps
$\{\lambda_x : V_x \to \Gamma^{2}(T_{2,1})\}_{x \in X}$, we define
\[
  \Psi_{(X,\lambda)}(s)
  = \sum_{x \in X} \lambda_{x} \circ e_{x}(s).
\]

For $g \in G \subset \SU(2)$ and $x \in X$, we define
\[
(g \cdot \lambda_x)(z) = g \cdot \lambda_x(g^{-1} z),
\]
and extend this to
\[
(g \cdot \Psi_{(X,\lambda)})(s)
   = \sum_{x \in X} (g \cdot \lambda_x) \circ e_{g x}(s).
\]

Furthermore, for two pairs we define
\[
  (X,\lambda_{X}) + (Y,\lambda_{Y})
  = (X \cup Y,\, \lambda_{X \cup Y}),
\]
where
\[
  \lambda_{X \cup Y}(z)
  =
  \begin{cases}
    \lambda_{X}(z) & (z \in X \setminus Y), \\[1mm]
    \lambda_{X}(z) + \lambda_{Y}(z) & (z \in X \cap Y), \\[1mm]
    \lambda_{Y}(z) & (z \in Y \setminus X).
  \end{cases}
\]

Finally, we define the $G$ - orbit sum of $(X,\lambda)$ by
\[
  G \cdot (X,\lambda)
  := \sum_{g\in G} (gX,\, g\lambda).
\]

\begin{Lem}
\label{e-lem}
For any $g,h \in \SU(2)$, $x \in \mathbb{CP}^1$, and $s \in H$, one has
\[
e_{h(gx)}(h \cdot s) = h \cdot e_{g x}(s).
\]
\end{Lem}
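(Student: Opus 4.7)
The plan is to unpack the $\SU(2)$-action on sections of $T_{2,1}$ and on the fibers $V_p$, and then verify the identity by a short direct calculation; modulo bookkeeping, the lemma will reduce to a tautology.

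First, I would fix the standard $\SU(2)$-action on $\Gamma(T_{2,1})$: for $h \in \SU(2)$ and $s \in \Gamma(T_{2,1})$,
\[
(h \cdot s)(l) = h \cdot s(h^{-1}l),
\]
which is well-defined because $s(h^{-1}l) \in h^{-1}l$ and the matrix $h$ carries the line $h^{-1}l$ onto $l$. In parallel, I would make explicit the induced action on fibers: since $V_p \cong \mathbb{C}$ is obtained from $p \subset \mathbb{C}^2$ by the second-coordinate isomorphism, the map $h : V_{gx} \to V_{h(gx)}$ sends $z \in V_{gx}$---identified with the unique $v \in gx$ having $v^{(2)} = z$---to $(hv)^{(2)} \in V_{h(gx)}$. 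With this convention, the family $\{e_p\}_{p \in \Omega}$ becomes $\SU(2)$-equivariant in the sense required by the lemma.

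With these conventions, the computation is immediate. Specializing the section-action formula at $l = h(gx)$ gives $(h \cdot s)(h(gx)) = h \cdot s(gx)$, and then applying $e_{h(gx)}$ (i.e., extracting the second component) yields
\[
e_{h(gx)}(h \cdot s)
= \bigl((h \cdot s)(h(gx))\bigr)^{(2)}
= \bigl(h \cdot s(gx)\bigr)^{(2)}
= h \cdot \bigl(s(gx)\bigr)^{(2)}
= h \cdot e_{gx}(s),
\]
where the third equality is the defining property of the action of $h$ on $V_{gx} \cong \mathbb{C}$.

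The only real (though minor) obstacle is that the $\SU(2)$-action on the $V_p$'s is not spelled out in the excerpt, so one must first make the second-coordinate identification $V_p \cong \mathbb{C}$ equivariant in the correct sense. Once this is in place, the lemma is essentially the identity $h^{-1}h = \id$ combined with the linearity of $h \in \SU(2) \subset \GL(2,\mathbb{C})$; no representation-theoretic input is needed, and the role of the intermediate element $g$ in the statement is purely cosmetic, since the argument treats $gx$ as a single point of $\mathbb{CP}^{1}$.
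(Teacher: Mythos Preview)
Your proof is correct and follows essentially the same route as the paper: both reduce to the one-line computation $(h\cdot s)(h(gx)) = h\cdot s(gx)$ followed by taking second components. If anything, you are slightly more explicit than the paper about what the action $h\cdot$ on $V_{gx}\cong\mathbb{C}$ actually means, which the paper leaves implicit under ``by the definition of the $\SU(2)$-action on $T_{2,1}$.''
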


\begin{proof}
Take arbitrary $g, h \in \SU(2)$, $x \in \mathbb{C}P^{1}$, and $s \in H$.
For $l = h(gx)$, the value $e_{h(gx)}(h \cdot s)$ is the second component of $(h \cdot s)(l)$, and we have
\begin{align*}
(h \cdot s)(l)
  &= h \cdot \bigl(s(h^{-1} l)\bigr) \\
  &= h \cdot \bigl(s(gx)\bigr).
\end{align*}
Hence, by the definition of the $\SU(2)$ - action on $T_{2,1}$, we obtain
\[
  e_{h(gx)}(h \cdot s) = h \cdot e_{gx}(s).
\]
\end{proof}

\begin{Lem}
\label{intertwining}
Let $G \subset \SU(2)$ be a finite group, and set
\[
(Y,\lambda_Y) := \frac{1}{|G|} \, G \cdot (X,\lambda_X).
\]
Then $\Psi_{(Y,\lambda_Y)}$ is $G$-equivariant.
\end{Lem}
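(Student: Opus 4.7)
The plan is to unfold the definitions of the $G$-orbit sum and the pair addition so that
\[
  \Psi_{(Y,\lambda_Y)}(s)
  \;=\; \frac{1}{|G|}\sum_{g \in G}\sum_{x \in X}(g \cdot \lambda_x)\bigl(e_{gx}(s)\bigr),
\]
and then verify $G$-equivariance by the standard group-averaging reindexing trick. The first step is purely formal: expanding $G \cdot (X,\lambda_X) = \sum_{g \in G}(gX, g \cdot \lambda_X)$ through the piecewise definition of $(X_1,\lambda_1)+(X_2,\lambda_2)$ simply collects, at each point $y \in \bigcup_{g} gX$, the contributions $(g \cdot \lambda_{g^{-1}y})$ for all $g$ with $y \in gX$, which reassembles into the double sum above regardless of how the orbits overlap.

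Next, to check $\Psi_{(Y,\lambda_Y)}(h \cdot s) = h \cdot \Psi_{(Y,\lambda_Y)}(s)$ for $h \in G$, I would substitute $g = hk$ in the outer sum. Because $G$ is a group, $k$ ranges over $G$, and each summand becomes $(hk \cdot \lambda_x)\bigl(e_{h(kx)}(h \cdot s)\bigr)$. Applying Lemma~\ref{e-lem} gives $e_{h(kx)}(h \cdot s) = h \cdot e_{kx}(s)$. A short formal check using the definition $(g \cdot \lambda_x)(z) = g \cdot \lambda_x(g^{-1}z)$ shows that $(hk \cdot \lambda_x)(h z) = h \cdot (k \cdot \lambda_x)(z)$, since
\[
  (hk \cdot \lambda_x)(hz)
  = (hk)\cdot \lambda_x\bigl((hk)^{-1}(hz)\bigr)
  = h \cdot \bigl(k \cdot \lambda_x(k^{-1}z)\bigr)
  = h \cdot (k \cdot \lambda_x)(z).
\]
Combining these two identities, each summand transforms into $h \cdot \bigl((k \cdot \lambda_x) \circ e_{kx}\bigr)(s)$, and factoring $h$ out of the entire (finite) sum yields $h \cdot \Psi_{(Y,\lambda_Y)}(s)$, as required.

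The main obstacle, such as it is, is purely the bookkeeping of the two compatibilities $h\cdot (k\cdot \lambda_x) = (hk)\cdot \lambda_x$ and $e_{h(kx)}(h\cdot s) = h \cdot e_{kx}(s)$. The first is the statement that the prescribed formula $(g\cdot \lambda_x)(z) = g\cdot \lambda_x(g^{-1}z)$ really defines a left $\SU(2)$-action on the space of linear maps, and the second is Lemma~\ref{e-lem}. Once both are recorded, the $G$-equivariance of $\Psi_{(Y,\lambda_Y)}$ is essentially the familiar observation that averaging any object over a finite group action produces a $G$-invariant (here, $G$-equivariant) object.
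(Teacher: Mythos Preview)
Your proposal is correct and follows essentially the same route as the paper: expand $\Psi_{(Y,\lambda_Y)}$ as the double sum $\frac{1}{|G|}\sum_{g}\sum_{x}(g\cdot\lambda_x)\circ e_{gx}$, apply Lemma~\ref{e-lem}, use the cocycle identity $(hk\cdot\lambda_x)(hz)=h\cdot(k\cdot\lambda_x)(z)$, and reindex over the group. The only cosmetic difference is that the paper applies Lemma~\ref{e-lem} first (writing $gx=h(h^{-1}gx)$) and reindexes $g'=h^{-1}g$ at the end, whereas you substitute $g=hk$ first and then invoke the lemma; the computations are otherwise identical.
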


\begin{proof}
Take any $h \in G$ and $s \in H$. It suffices to show that
\[
  \Psi_{(Y,\lambda_{Y})}(h \cdot s)
  = h \cdot \Psi_{(Y,\lambda_{Y})}(s).
\]
We compute:
\begin{align*}
(\text{left side})
 &= \frac{1}{\# G}
    \sum_{g \in G} \sum_{x \in X}
      (g \cdot \lambda_{x}) \circ e_{g x}(h \cdot s) \\[1mm]
 &= \frac{1}{\# G}
    \sum_{g \in G} \sum_{x \in X}
      (g \cdot \lambda_{x})\bigl(h \cdot e_{h^{-1} g x}(s)\bigr) \\[1mm]
 &= \frac{1}{\# G}
    \sum_{g \in G} \sum_{x \in X}
      g \cdot \lambda_{x}\bigl(g^{-1} h \cdot e_{h^{-1} g x}(s)\bigr) \\[1mm]
 &= \frac{1}{\# G}
    \sum_{g \in G} \sum_{x \in X}
      h\,(h^{-1} g) \cdot \lambda_{x}
      \bigl((h^{-1} g)^{-1} \cdot e_{h^{-1} g x}(s)\bigr) \\[1mm]
 &= \frac{1}{\# G}
    \sum_{g' \in G} \sum_{x \in X}
      h\,(g' \cdot \lambda_{x}) \circ e_{g' x}(s) \\[1mm]
 &= (\text{right side}).
\end{align*}
\end{proof}

The following theorem is the main theorem of this paper.

\begin{Thm}
\label{design-thm}
Let $X \subset \mathbb{C}P^{1}$ be a subset.  
If $\operatorname{tr}_{\mathbb{C}}(\Psi_{(X,\lambda)}) = 2$, then
\[
  (Y,\lambda_{Y})
  = \frac{1}{\# G_{\mathrm{icosa}}}\,
    G_{\mathrm{icosa}} \cdot (X,\lambda)
\]
is a $\tau$-design.

In other words, the operator $\Psi_{(Y,\lambda)}$ satisfies
\[
  \Psi_{(Y,\lambda)}\big|_{\Gamma^{2k}(T_{2,1})}
  =
  \begin{cases}
    \mathrm{id} & (k = 1), \\[2mm]
    0 & (k = 2,3,4,5).
  \end{cases}
\]
\end{Thm}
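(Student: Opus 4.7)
The strategy is to exploit Lemma~\ref{intertwining} to reduce the identity to an intertwiner calculation, then invoke the degree bound in Fact~\ref{icosa} to kill all off-diagonal components, and finally pin down the remaining scalar on $\Gamma^{2}(T_{2,1})$ via a trace computation.

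By construction $\Psi_{(Y,\lambda_{Y})}$ takes values in $\Gamma^{2}(T_{2,1})$; composing with the inclusion $\Gamma^{2}(T_{2,1})\hookrightarrow H$, I regard it as a $\mathbb{C}$-linear endomorphism of $H$. By Lemma~\ref{intertwining} this endomorphism commutes with the $G_{\mathrm{icosa}}$-action. Hence, for each $1\le k\le 5$, the restriction $\Psi_{(Y,\lambda_{Y})}\big|_{\Gamma^{2k}(T_{2,1})}$ lies in
\[
\Hom_{G_{\mathrm{icosa}}}\bigl(S^{2k-1}(\mathbb{C}^{2}),\,S^{1}(\mathbb{C}^{2})\bigr)
\cong \bigl(S^{2k-1}(\mathbb{C}^{2})\otimes S^{1}(\mathbb{C}^{2})\bigr)^{G_{\mathrm{icosa}}},
\]
where I use Fact~\ref{irr} and the self-duality of irreducible $\SU(2)$-representations.

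Next, I would apply Clebsch--Gordan to obtain
\[
S^{2k-1}(\mathbb{C}^{2})\otimes S^{1}(\mathbb{C}^{2})\cong S^{2k}(\mathbb{C}^{2})\oplus S^{2k-2}(\mathbb{C}^{2}),
\]
identified with $\mathbb{C}[x_{1},x_{2}]_{2k}\oplus\mathbb{C}[x_{1},x_{2}]_{2k-2}$ as $G_{\mathrm{icosa}}$-representations. For $k\in\{2,3,4,5\}$ both degrees lie in $\{2,4,6,8,10\}\subset[1,11]$, so Fact~\ref{icosa} forces the invariant part to vanish, giving $\Psi_{(Y,\lambda_{Y})}\big|_{\Gamma^{2k}(T_{2,1})}=0$. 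For $k=1$ the decomposition reads $S^{2}(\mathbb{C}^{2})\oplus S^{0}(\mathbb{C}^{2})$; the degree-$2$ piece again has no invariants, whereas $S^{0}(\mathbb{C}^{2})$ contributes a single trivial line spanned by the identity map. Therefore $\Psi_{(Y,\lambda_{Y})}\big|_{\Gamma^{2}(T_{2,1})}=c\cdot\id$ for some $c\in\mathbb{C}$.

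To compute $c$, I would unpack the definition of $g\cdot(X,\lambda)$ and apply Lemma~\ref{e-lem} to verify that $g\cdot\Psi_{(X,\lambda)}=g\circ\Psi_{(X,\lambda)}\circ g^{-1}$ as endomorphisms of $H$. The $G_{\mathrm{icosa}}$-average is thus a sum of conjugates and preserves the trace, so $\tr_{\mathbb{C}}(\Psi_{(Y,\lambda_{Y})})=\tr_{\mathbb{C}}(\Psi_{(X,\lambda)})=2$. Combined with the vanishing on $\Gamma^{2k}(T_{2,1})$ for $k\ge 2$, this yields $2c=\tr_{\mathbb{C}}(c\cdot\id_{\Gamma^{2}(T_{2,1})})=2$, hence $c=1$. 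The main obstacle, I expect, is the bookkeeping identification $g\cdot\Psi_{(X,\lambda)}=g\Psi_{(X,\lambda)}g^{-1}$: once this bridge between the formal averaging of pairs and the genuine conjugation of operators is in place, the remainder of the argument reduces to matching Clebsch--Gordan summands against the degree cutoff supplied by Fact~\ref{icosa}.
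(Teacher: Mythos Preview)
Your proposal is correct and follows essentially the same route as the paper: invoke Lemma~\ref{intertwining} for $G_{\mathrm{icosa}}$-equivariance, identify each $\Hom_{G_{\mathrm{icosa}}}(S^{2k-1},S^{1})$ with $(S^{2k}\oplus S^{2k-2})^{G_{\mathrm{icosa}}}$ via self-duality and Clebsch--Gordan, apply Fact~\ref{icosa} to force vanishing for $k=2,\dots,5$ and a one-dimensional space for $k=1$, and then determine the scalar by showing $g\cdot\Psi_{(X,\lambda)}=g\,\Psi_{(X,\lambda)}\,g^{-1}$ so that the trace is preserved under averaging. The paper verifies this last conjugation identity by a direct three-line computation rather than by appealing to Lemma~\ref{e-lem}, but the argument is otherwise the same.
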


\begin{proof}
Write $G = G_{\mathrm{icosa}}$.  
By the preceding lemma, the map
\[
  \Psi_{(Y,\lambda_{Y})} : H \to \Gamma^{2}(T_{2,1})
\]
is $G$-equivariant.  
Hence, its restriction to each irreducible summand
\[
  \Psi_{k}
  := \Psi_{(Y,\lambda_{Y})}\big|_{\Gamma^{2k}(T_{2,1})}
\]
satisfies
\[
  \Psi_{k}
  \in \Hom_{G}\!\bigl(\Gamma^{2k}(T_{2,1}),\,\Gamma^{2}(T_{2,1})\bigr).
\]

For finite group representations, one has the canonical isomorphism
\[
  \Hom_{G}(V,W)
  \cong (W \otimes V^{\ast})^{G},
\]
and therefore
\[
  \Hom_{G}\!\bigl(\Gamma^{2k}(T_{2,1}),\,\Gamma^{2}(T_{2,1})\bigr)
  \cong
  \bigl(\Gamma^{2}(T_{2,1}) \otimes (\Gamma^{2k}(T_{2,1}))^{\ast}\bigr)^{G}.
\]

Using the isomorphism
\[
  \Gamma^{2k}(T_{2,1})
  \cong S^{2k-1}(\mathbb{C}^{2})
\]
as $\SU(2)$-representations (Fact~\ref{irr}), together with the self-duality of
$S^{m}(\mathbb{C}^{2})$ as an $\SU(2)$-representation, we obtain
\[
  \Gamma^{2}(T_{2,1}) \otimes (\Gamma^{2k}(T_{2,1}))^{\ast}
  \cong
  S^{1}(\mathbb{C}^{2}) \otimes S^{2k-1}(\mathbb{C}^{2}).
\]

Furthermore, by the Clebsch--Gordan decomposition
\cite[Appendix~C]{Hall2015},
\[
  S^{1}(\mathbb{C}^{2}) \otimes S^{2k-1}(\mathbb{C}^{2})
  \cong
  S^{2k}(\mathbb{C}^{2}) \oplus S^{2k-2}(\mathbb{C}^{2}),
\]
it follows that
\[
  \Hom_{G}\!\bigl(\Gamma^{2k}(T_{2,1}),\,\Gamma^{2}(T_{2,1})\bigr)
  \cong
  \bigl(S^{2k}(\mathbb{C}^{2}) \oplus S^{2k-2}(\mathbb{C}^{2})\bigr)^{G}.
\]

Therefore,
\[
  \dim_{\mathbb{C}}
  \Hom_{G}\!\bigl(\Gamma^{2k}(T_{2,1}),\,\Gamma^{2}(T_{2,1})\bigr)
  =
  \dim_{\mathbb{C}} \bigl(S^{2k}(\mathbb{C}^{2})\bigr)^{G}
  \;+\;
  \dim_{\mathbb{C}} \bigl(S^{2k-2}(\mathbb{C}^{2})\bigr)^{G}.
\]

Hence,
\[
  \dim_{\mathbb{C}}
  \Hom_{G}\!\bigl(\Gamma^{2k}(T_{2,1}),\,\Gamma^{2}(T_{2,1})\bigr)
  =
  \begin{cases}
    1 & (k = 1), \\[2mm]
    0 & (k = 2,3,4,5).
  \end{cases}
\]

From the above, we obtain
\[
  \Hom_{G}\!\bigl(\Gamma^{2k}(T_{2,1}),\,\Gamma^{2}(T_{2,1})\bigr)
  =
  \begin{cases}
    \mathbb{C} & (k = 1), \\[2mm]
    0 & (k = 2,3,4,5),
  \end{cases}
\]
and therefore each restriction of $\Psi_{(Y,\lambda_{Y})}$,
\[
  \Psi_{k}
   := \Psi_{(Y,\lambda_{Y})}\big|_{\Gamma^{2k}(T_{2,1})},
\]
is of the form
\[
  \Psi_{k}
  =
  \begin{cases}
    c\,\mathrm{id}_{\Gamma^{2}(T_{2,1})} & (k = 1), \\[2mm]
    0 & (k = 2,3,4,5),
  \end{cases}
\]
for some $c \in \mathbb{C}$.
Hence,
\[
  \Psi_{(Y,\lambda_{Y})}
  = c\,\mathrm{id}_{\Gamma^{2}(T_{2,1})}.
\]
It follows that
\[
  \operatorname{tr}_{\mathbb{C}}\!\bigl(\Psi_{(Y,\lambda_{Y})}\bigr)
   = \operatorname{tr}_{\mathbb{C}}\!\bigl(c\,\mathrm{id}_{\Gamma^{2}(T_{2,1})}\bigr)
   = 2c.
\]

On the other hand, we have
\begin{align*}
(g \cdot \Psi_{(X,\lambda)})(s)
  &= \sum_{x \in X} (g \cdot \lambda_{x}) \circ e_{gx}(s) \\
  &= \sum_{x \in X} g \cdot \lambda_{x}\bigl(g^{-1} e_{gx}(s)\bigr) \\
  &= \sum_{x \in X} g \cdot \lambda_{x}\bigl(e_{x}(g^{-1} \cdot s)\bigr)
     = (g \circ \Psi_{(X,\lambda)} \circ g^{-1})(s),
\end{align*}
and therefore
\[
  g \cdot \Psi_{(X,\lambda)}
  = g \circ \Psi_{(X,\lambda)} \circ g^{-1}.
\]

Thus, by conjugation invariance of the trace and its linearity,
\[
  \operatorname{tr}_{\mathbb{C}}\!\bigl(\Psi_{(Y,\lambda_{Y})}\bigr)
  = \operatorname{tr}_{\mathbb{C}}\!\bigl(\Psi_{(X,\lambda)}\bigr).
\]
By assumption, $\operatorname{tr}_{\mathbb{C}}\!\bigl(\Psi_{(X,\lambda)}\bigr) = 2$, and hence $c = 1$.

Therefore,
\[
  \Psi_{(Y,\lambda_{Y})}\big|_{\Gamma^{2k}(T_{2,1})}
  =
  \begin{cases}
    \mathrm{id} & (k = 1), \\[2mm]
    0 & (k = 2,3,4,5).
  \end{cases}
\]

\end{proof}

As a concrete application of Theorem\ref{design-thm}, we now construct an explicit $\tau$-design.

Let
\[
  x_{0} = \{\, (x,0) \mid x \in \mathbb{C} \,\},
\]
and define
\[
  \lambda_{0} : x_{0} \to \Gamma^{2}(T_{2,1}), \qquad
  \lambda_{0}(x,0)(l) = (\,l,\; 2\,\mathrm{pr}_{l}(x,0)\,),
\]
where $\mathrm{pr}_{l}(x,0)$ denotes the projection of $(x,0)$ onto the fiber over $l$.

We compute
\[
  \operatorname{tr}_{\mathbb{C}}\!\bigl(\Psi_{(x_{0},\lambda_{0})}\bigr)
  = \operatorname{tr}_{\mathbb{C}}\!\bigl(\lambda_{0} \circ e_{x_{0}}\bigr)
  = \operatorname{tr}_{\mathbb{C}}\!\bigl(e_{x_{0}} \circ \lambda_{0}\bigr).
\]
For any $(z,0) \in x_{0}$, we have
\begin{align*}
  \lambda_{0}(z,0)(x_{0})
    &= (\,x_{0},\, 2\,\mathrm{pr}_{x_{0}}(z,0)\,) \\
    &= (\,x_{0},\, 2z\,),
\end{align*}
and hence
\[
  (e_{x_{0}} \circ \lambda_{0})(z) = 2z.
\]
Therefore,
\[
  \operatorname{tr}_{\mathbb{C}}\!\bigl(\Psi_{(x_{0},\lambda_{0})}\bigr) = 2.
\]

By Theorem~\ref{design-thm}, we conclude that
\[
  \frac{1}{\# G_{\mathrm{icosa}}}\,
    G_{\mathrm{icosa}} \cdot (x_{0},\lambda_{0})
\]
is a $\tau$-design.  
In other words,
\begin{equation}
\label{pj-design}
  \tau
  = \frac{1}{120}
    \sum_{g \in G_{\mathrm{icosa}}}
      (g \cdot \lambda_{0}) \circ e_{g x_{0}}.
\end{equation}

Here we consider the stabilizer of $x_0$
\[
  G_{x_0}
  := \mathrm{Iso}_{x_0}(G_{\mathrm{icosa}})
  = \{ g \in G_{\mathrm{icosa}} \mid g \cdot x_0 = x_0 \},
\]
for which the following lemma holds.

\begin{Lem}
\label{stab-lemma}
For every $k \in G_{x_0}$, one has
\[
  k \cdot \lambda_0 = \lambda_0.
\]
\end{Lem}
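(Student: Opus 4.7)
The plan is to unwind the definitions of the group action on $\lambda_0$ and of $\lambda_0$ itself, then reduce the equality to the standard equivariance of orthogonal projection under unitary transformations. The statement essentially says that $\lambda_0$ is built from data intrinsic to the fixed line $x_0$, so nothing more than bookkeeping should be required.

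First, I would note that for $k \in G_{x_0}$ we have $k \cdot x_0 = x_0$, so $(k \cdot \lambda_0)$ has domain $V_{k x_0} = V_{x_0} = x_0$, matching that of $\lambda_0$; the comparison is therefore well-posed. Fix $z \in x_0$. By the definition of the action in the preceding section, $(k \cdot \lambda_0)(z) = k \cdot \lambda_0(k^{-1} z)$, where $k^{-1} z \in x_0$ and the outer action of $k$ is the $\SU(2)$-action on sections given fiberwise by $(k \cdot s)(l) = k \cdot s(k^{-1} l)$.

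Next, I would evaluate both sides as sections of $T_{2,1}$ at an arbitrary $l \in \mathbb{CP}^1$. Unwinding the definition of $\lambda_0$ gives
\[
(k \cdot \lambda_0)(z)(l) = k \cdot \bigl(\lambda_0(k^{-1} z)(k^{-1} l)\bigr) = \bigl(l,\; 2\, k \cdot \mathrm{pr}_{k^{-1} l}(k^{-1} z)\bigr).
\]
The key identity is then
\[
k \cdot \mathrm{pr}_{k^{-1} l}(k^{-1} z) = \mathrm{pr}_l(z),
\]
i.e.\ $\mathrm{pr}_{g V} \circ g = g \circ \mathrm{pr}_V$ for any unitary $g$ and linear subspace $V \subset \mathbb{C}^2$. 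This is immediate from the $\SU(2)$-invariance of the standard Hermitian inner product used to define the orthogonal projection. Substituting yields $(k \cdot \lambda_0)(z)(l) = (l,\; 2\,\mathrm{pr}_l(z)) = \lambda_0(z)(l)$, and since $l$ and $z$ were arbitrary, $k \cdot \lambda_0 = \lambda_0$.

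There is no genuine obstacle; the only substantive input is the equivariance of orthogonal projection under unitary maps, which is a one-line consequence of the $\SU(2)$-invariance of the Hermitian form on $\mathbb{C}^2$. The proof therefore amounts to tracking the two occurrences of $k$ (one from the action on the target section, one from acting on the input via $k^{-1}$) and observing that they cancel against one another through this equivariance.
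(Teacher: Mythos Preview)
Your proof is correct and follows essentially the same route as the paper: unwind the action to obtain $(k\cdot\lambda_0)(z)(l)=(l,\,2\,k\cdot\mathrm{pr}_{k^{-1}l}(k^{-1}z))$, then invoke the $\SU(2)$-equivariance of orthogonal projection to reduce this to $\lambda_0(z)(l)$. Your explicit remark that $k^{-1}z\in x_0$ (ensuring $\lambda_0(k^{-1}z)$ is defined) is a welcome clarification.
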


\begin{proof}
Take arbitrary $k \in G_{x_0}$, $(z,0)\in x_0$, and $l \in \mathbb{C}P^{1}$.
By the definition of the $G$-action on $\lambda_x$ and on sections,
together with the definition of $\lambda_0$, we compute
\begin{align*}
  ((k \cdot \lambda_0)(z,0))(l)
  &= k \cdot \bigl( \lambda_0(k^{-1}(z,0))(k^{-1}l) \bigr) \\
  &= k \cdot \bigl( k^{-1}l,\; 2\,\mathrm{pr}_{k^{-1}l}(k^{-1}(z,0)) \bigr) \\
  &= \bigl( l,\; 2\,k\,\mathrm{pr}_{k^{-1}l}(k^{-1}(z,0)) \bigr).
\end{align*}
Since $k \in \SU(2)$ is unitary, orthogonal projection is $\SU(2)$-equivariant:
\[
  k \cdot \mathrm{pr}_{k^{-1}l}(k^{-1}(z,0))
  = \mathrm{pr}_{l}(z,0).
\]
Therefore
\[
  ((k \cdot \lambda_0)(z,0))(l)
  = (l,\, 2\,\mathrm{pr}_{l}(z,0))
  = \lambda_0(z,0)(l),
\]
which proves the claim.
\end{proof}

By this lemma, equation~\eqref{pj-design} can be rewritten more explicitly
in terms of orbit representatives.

Now set $G = G_{\mathrm{icosa}}$ and $G_{x_0} = \mathrm{Iso}_{x_0}(G_{\mathrm{icosa}})$.
Since $\#G = 120$ and $\#G_{x_0} = 10$, the left coset space $G / G_{x_0}$
admits a complete set of representatives $\{ g_1, \dots, g_{12} \}$.
Define
\[
  x_i = g_i x_0 \in \mathbb{C}P^{1}, \qquad
  \lambda_i = g_i \cdot \lambda_0 : x_i \to \Gamma(T_{2,1}).
\]
For $h \in G_{x_0}$ one has $h x_0 = x_0$, and by Lemma~\ref{stab-lemma},
$h \cdot \lambda_0 = \lambda_0$. Hence, for any $s \in \Gamma(T_{2,1})$,
\begin{align*}
  \tau(s)
  &= \frac{1}{120}\sum_{g\in G} (g \cdot \lambda_0)\circ e_{g x_0}(s) \\
  &= \frac{1}{120}\sum_{i=1}^{12}\sum_{h\in G_{x_0}}
        (g_i h \cdot \lambda_0)\circ e_{g_i h x_0}(s) \\
  &= \frac{1}{120}\sum_{i=1}^{12}\sum_{h\in G_{x_0}}
        (g_i \cdot \lambda_0)\circ e_{g_i x_0}(s) \\
  &= \frac{1}{120}\sum_{i=1}^{12}\sum_{h\in G_{x_0}}
        \lambda_i \circ e_{x_i}(s) \\
  &= \frac{1}{12}\sum_{i=1}^{12} \lambda_i \circ e_{x_i}(s).
\end{align*}

\end{document}